\documentclass[12pt]{article}
\usepackage{graphicx}
\usepackage[english]{babel}
\usepackage[letterpaper,top=2.5cm,bottom=2.5cm,left=2.5cm,right=2.5cm,marginparwidth=1.75cm]{geometry}
\usepackage{amsmath,amsthm,amssymb, enumitem, mathtools, mathrsfs,bbm}
\usepackage{graphicx}
\usepackage{subcaption}
\usepackage{bm}
\usepackage[colorlinks = true,
            linkcolor = blue,
            urlcolor  = blue,
            citecolor = blue,
            anchorcolor = blue,
            hyperfootnotes=false]{hyperref}
\usepackage[symbol]{footmisc}

\usepackage[giveninits=true,backend=biber, maxnames=4,maxalphanames=4,style=alphabetic,doi=false,isbn=false,url=false,eprint=true]{biblatex}
\addbibresource{SystemicRisk.bib}
\renewbibmacro{in:}{}
\AtEveryBibitem{%
  \clearlist{language}%
}

\newcommand{\R}{\mathbb{R}}

\renewcommand{\d}{\mathrm{d}}

\newcommand{\lb}{\left}
\newcommand{\rb}{\right}

\newcommand{\wt}{\widetilde}
\newcommand{\wh}{\widehat}

\newcommand{\bP}{\mathbb{P}}

\newcommand{\bF}{\mathbb{F}}
\newcommand{\E}{\mathbb{E}}

\newcommand{\calF}{\mathcal{F}}

\theoremstyle{plain}
\newtheorem{theorem}{Theorem}
\newtheorem*{theorem*}{Theorem}

\newtheorem*{lemma*}{Lemma}
\newtheorem{lemma}{Lemma}
\newtheorem{corollary}{Corollary}

\theoremstyle{definition}

\theoremstyle{remark}
\newtheorem{remark}{Remark}
\newtheorem*{remark*}{Remark}

\begin{document}

\begin{center}
\Large
Minimal Solutions to the Skorokhod Reflection Problem Driven by Jump Processes and an Application to Reinsurance

\vspace{1em}
\normalsize
\today

\vspace{1em}

Graeme Baker\footnote{Department of Statistics, Columbia University, NY, USA \href{mailto:g.baker@columbia.edu}{g.baker@columbia.edu}} and Ankita Chatterjee\footnote{Department of Mathematics, Barnard College, NY, USA \href{mailto:ac5481@barnard.edu}{ac5481@barnard.edu}} 

\end{center}

\begin{abstract}
We consider a reflected process in the positive orthant driven by an exogenous jump process. For a given input process, we show that there exists a unique minimal strong solution to the given particle system up until a certain stopping time, which is stated explicitly in terms of the dual formulation of a linear programming problem associated with the state of the system. We apply this model to study the ruin time of interconnected insurance firms, where the stopping time can be interpreted as the failure time of a reinsurance agreement between the firms. Our work extends the analysis of the particle system in \cite{baker_particle_2025} to the case of jump driving processes, and the existence result of \cite{reiman_open_1984} beyond the case of sub-stochastic reflection matrices.
\end{abstract}

\section{Introduction}
Fix a probability space $(\Omega,\calF,\bP)$ with a filtration $\bF=(\calF_t)_{t\ge0-}$ satisfying the usual hypotheses. Throughout this work, we require all of our stochastic processes to be c\`adl\`ag (meaning right continuous with left limits). We denote the jump of any given process $A$ at time $t$ by $\Delta A_t:=A_t-A_{t-}$. To allow for jumps at the initial time, we prepend a left limit $0-$ to a semi-infinite interval and consider the index set $\{0-\}\cup[0,\infty)$. Consider $n\ge 1$ stochastic processes $X^1,\dots,X^n$, which for $t\ge 0-$ satisfy the system of equations
\begin{equation}\label{eq:ps}
X_t^i=X_{0-}^i+c_it-Z_t^i+L_t^i-\sum_{j\neq i}q_{ij}L_t^j,\quad 1\le i\le n,
\end{equation}
where $X_{0-}^1,\dots, X_{0-}^n\ge 0$ are $\calF_{0-}$-measurable initial conditions; $c_1,\dots,c_n$ are non-negative constants; $Q=(q_{ij})_{i,j=1}^n$ is a non-negative matrix with $q_{ii}=0$ for $1\le i\le n$; $Z^1,\dots,Z^n$ are $\bF$-adapted jump processes with (almost surely) finitely many jumps in any interval (for instance, compound Poisson processes); and $L^1,\dots,L^n$ are reflection processes which constrain $X^1,\dots,X^n$ to remain in the non-negative orthant $\R^n_+$. 

\medskip

We ask that the reflection processes each satisfy a one-dimensional Skorokhod reflection problem (see, for instance, \cite[Lemma 6.14]{karatzas_brownian_2014}):
\begin{equation}\label{eq:Ldef}
L_t^i=\sup_{s\le t}\bigg(X_{0-}^i+c_it-Z_s^i-\sum_{j\neq i}q_{ij}L_s^j\bigg)_-,\quad 1\le i\le n,
\end{equation}
where $(a)_-:=-\min(0,a)$ for $a\in\R$. For a given input process $Y$ taking values in $\R$, $L_t:=\sup_{s\le t} (Y_s)_-$ is the smallest non-decreasing process such that $Y+L$ is non-negative for all times. Given inputs $X_{0-}^1,\dots, X_{0-}^n$, and $Z^1,\dots,Z^n$, a strong solution to \eqref{eq:ps}--\eqref{eq:Ldef} consists of a pair of processes $X=(X^1,\dots,X^n)$ and $L=(L^1,\dots,L^n)$ which simultaneously satisfy \eqref{eq:ps} and \eqref{eq:Ldef} on a (possibly random) time interval $[0,\tau)$. 
In Theorem \ref{thm}, we show that a strong solution to \eqref{eq:ps}--\eqref{eq:Ldef} on $[0,\tau)$ can be continued to $[0,\tau]$ if and only if $X_{\tau-}-\Delta Z_\tau$ is contained in a certain dual cone. For $x,y\in\R^n$, introduce the notation $x\ge y$ to mean that $x_i\ge y_i$ for all $1\le i\le n$. As a consequence (Corollary \ref{coro}), we establish existence and uniqueness of a \emph{minimal} solution $(X,L)$ and a \emph{maximal} stopping time $\tau^*$ such that if $(\wt{X},\wt{L})$ is any other strong solution to \eqref{eq:ps}--\eqref{eq:Ldef} on $[0,\wt\tau)$ with the same initial condition and driving processes as $(X,L)$ then $\wt\tau\le\tau^*$ and $\wt{L_t}\ge L_t$ for all $t\in[0,\tau)$.

\medskip

In a financial context, we interpret $X^1,\dots,X^n$ as the resource levels of $n$ insurance firms bound by a reinsurance agreement. The firms collect premiums at the constant rates $c_1,\dots,c_n$ and pay claims according to the exogenous shocks $Z^1,\dots,Z^n$. The matrix $Q$ encodes the routing of resources between firms due to reinsurance: firm $i$ remains solvent thanks to the the term $L^i$, but must contribute $q_{ij}L^j$ to firm $j$. The amount transferred from all firms to firm $j$ by time $t$ is $\sum_{i\neq j} q_{ij}L^j_t$. If $\sum_{i\neq j} q_{ij}>1$, there is friction in the transfers to $j$, possibly due to taxes or fees. The minimal solution $L^1,\dots,L^n$ gives the most parsimonious choice to redistribute resources between $X^1,\dots,X^n$, keeping all resource levels non-negative at all times prior to the maximal time $\tau^*$. We interpret $\tau^*$ itself as a ruin time, when there is insufficient liquidity and the reinsurance agreement breaks down. In Section \ref{sec:app}, we compare ruin probabilities for a particular case with $n=2$, both with and without reinsurance.

\medskip

\section{Main Result}

When does \eqref{eq:ps}--\eqref{eq:Ldef} admit solutions? If the spectral radius of $Q$, $\rho(Q)$, is strictly less than 1, then solutions exist for all time by \cite[Proposition 1]{reiman_open_1984} (and see also \cite{harrison_reflected_1981}, for continuous driving processes such as Brownian motion). For $\rho(Q)\ge 1$, can we go beyond this result? When the driving processes are Brownian motions, recent work has shown that existence and uniqueness hold on a stochastic interval which depends on the structure of $Q$, the covariance of the driving processes, and the given initial condition \cite{baker_particle_2025}. For the present setting where $X$ is driven by jump processes we state our main result, which we prove in Section \ref{sec:proof}. 

\begin{theorem}\label{thm}
Consider the cone $C=\{u\in\R^n:u\ge 0, u^\top (I-Q)\le 0\}$ and its dual cone $C^*=\{y\in\R^n:u^\top y \ge 0 \text{ for all }u\in C\}$.
A solution $(X,L)$ of \eqref{eq:ps}--\eqref{eq:Ldef} on an interval $[0,\tau)$ can be extended to $[0,\tau]$ if and only if $X_{\tau-}-\Delta Z_\tau \in C^*$. Moreover, when $X_{\tau-}-\Delta Z_\tau \in C^*$ there exists a unique minimal $\Delta L_\tau$ with respect to the partial order $\ge$ on $\R_+^n$ such that $L_\tau=L_{\tau-}+\Delta L_\tau$ gives an extension of $(X,L)$ on $[0,\tau]$.
\end{theorem}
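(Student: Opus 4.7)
The strategy is to reduce extension past $\tau$ to a linear complementarity problem for the jump increment $\Delta L_\tau$, and then to combine Farkas' lemma (for feasibility) with a monotone fixed-point iteration (for the minimal element). Taking the jump of \eqref{eq:ps} at $\tau$ gives
\[X_\tau = X_{\tau-} - \Delta Z_\tau + (I-Q)\Delta L_\tau,\]
so any extension requires $\Delta L_\tau \ge 0$ and $X_\tau \ge 0$. Splitting the supremum in \eqref{eq:Ldef} into its value for $s<\tau$ (which equals $L^i_{\tau-}$ by hypothesis) and at $s = \tau$ yields the Skorokhod complementarity at the jump: $\Delta L^i_\tau > 0$ implies $X^i_\tau = 0$. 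Equivalently, $\Delta L_\tau$ must be a fixed point of the map $F:\R^n_+\to\R^n_+$ defined by
\[F(x)^i = \max\Bigl(0,\ -X^i_{\tau-} + \Delta Z^i_\tau + \sum_{j \neq i} q_{ij} x^j\Bigr).\]

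For the ``only if'' direction, suppose such an extension exists. For any $u \in C$,
\[u^\top(X_{\tau-} - \Delta Z_\tau) = u^\top X_\tau - u^\top(I-Q)\Delta L_\tau \ge 0,\]
since $u, X_\tau, \Delta L_\tau \ge 0$ and $u^\top(I-Q) \le 0$. Hence $X_{\tau-}-\Delta Z_\tau \in C^*$.

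Conversely, if $X_{\tau-}-\Delta Z_\tau \in C^*$, Farkas' lemma says the system $\{\Delta L \ge 0,\ (I-Q)\Delta L \ge \Delta Z_\tau - X_{\tau-}\}$ is feasible if and only if no $u \ge 0$ with $u^\top(I-Q) \le 0$ satisfies $u^\top(X_{\tau-}-\Delta Z_\tau) < 0$, which is precisely the hypothesis. So some feasible $\Delta L \ge 0$ exists, and any such $\Delta L$ satisfies $F(\Delta L) \le \Delta L$ coordinate-wise.

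To extract the minimal fixed point, I iterate from $\Delta L^{(0)} = 0$ via $\Delta L^{(k+1)} = F(\Delta L^{(k)})$. Since $Q \ge 0$, $F$ is coordinate-wise nondecreasing, so the iterates are nondecreasing; using the feasible $\Delta L$ above as an upper bound and monotonicity yields $\Delta L^{(k)} \le \Delta L$ for all $k$. The limit $\Delta L^\star_\tau$ is a fixed point of $F$ by continuity, hence a valid jump increment. For any other fixed point $\wt{\Delta L}$, the same induction gives $\Delta L^\star_\tau \le \wt{\Delta L}$, so $\Delta L^\star_\tau$ is the unique minimum. The main obstacle is the reduction step: translating the coordinate-wise Skorokhod formulation \eqref{eq:Ldef} at a jump into the coupled LCP/fixed-point condition requires careful bookkeeping, because each $L^i_\tau$ depends on all of the other $L^j_\tau$; once this is done, Farkas' lemma and Tarski-style monotone iteration dispatch the remainder.
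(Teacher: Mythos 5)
Your proof is correct, and its skeleton overlaps substantially with the paper's: you reduce the extension at $\tau$ to the jump equation \eqref{eq:Ljump} (your fixed-point map $F$ is exactly the right-hand side of \eqref{eq:Ljump}, and your sup-splitting sketch is the content of the paper's Lemma \ref{lemma}), and you invoke Farkas's lemma to show that feasibility of the linear system $\{\Delta L\ge 0,\ (I-Q)\Delta L\ge \Delta Z_\tau-X_{\tau-}\}$ is equivalent to $X_{\tau-}-\Delta Z_\tau\in C^*$, just as the paper does. Where you genuinely diverge is in the second half. The paper extracts the minimal jump by solving the LP with objective $\|\Delta L_\tau\|_1$, showing any LP minimizer satisfies \eqref{eq:Ljump}, and proving uniqueness geometrically (no constraining face is orthogonal to $(1,\dots,1)$, and the argument survives replacing the objective by $a^\top\Delta L_\tau$ for any $a>0$, which is what delivers minimality in the partial order). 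You instead run the monotone iteration $\Delta L^{(k+1)}=F(\Delta L^{(k)})$ from $0$, bounded above by the Farkas-supplied feasible point since feasibility gives $F(\Delta L)\le\Delta L$, and observe that the limit lies below every fixed point; this is essentially the Knaster--Tarski scheme that the paper only mentions later, in its ``Fixed Point Approach'' section, as a computational alternative rather than as part of the proof. Your route has the advantage of producing the \emph{least} element of the admissible set directly (so minimality and uniqueness come for free, with no LP degeneracy analysis), and your direct dual-cone computation $u^\top(X_{\tau-}-\Delta Z_\tau)=u^\top X_\tau-u^\top(I-Q)\Delta L_\tau\ge0$ gives necessity without Farkas at all; the paper's LP route buys the stronger structural fact that the minimal jump is the unique optimizer for every strictly positive linear objective, hence directly computable by the simplex method. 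The one place you should tighten the write-up is the reduction you yourself flag: to pass from \eqref{eq:Ldef} at time $\tau$ to the complementarity/fixed-point condition you need $L^i_{\tau-}=\sup_{s<\tau}(\cdot)_-$ and the fact that the only discontinuities at $\tau$ come from $\Delta Z_\tau$ and $\Delta L_\tau$ (the drift being continuous), i.e., exactly the statement and proof of Lemma \ref{lemma}; spelled out, this closes the equivalence in both directions and your argument is complete.
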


As a corollary, show that for any given initial conditions and driving processes $(Z^1,\dots,Z^n)$ there exists a unique minimal strong solution of \eqref{eq:ps}--\eqref{eq:Ldef} on a maximal time interval (see, for instance, \cite{delarue_particle_2015,cuchiero_propagation_2023,baker_particle_2025} for similar notions in a range of contexts).

\begin{corollary}\label{coro}
Given inputs $X_{0-}^1,\dots, X_{0-}^n$, and $Z^1,\dots,Z^n$, there exists unique $(X,L)$ and $\tau^*$ such that $(X,L)$ solves \eqref{eq:ps}--\eqref{eq:Ldef} on $[0,\tau^*)$ and if $(\wt{X},\wt{L})$ is any other strong solution to \eqref{eq:ps}--\eqref{eq:Ldef} on $[0,\wt\tau)$ with the same inputs as $(X,L)$ then $\wt\tau\le\tau^*$ and $\wt{L_t}\ge L_t$ for all $t\in[0,\wt\tau)$.
\end{corollary}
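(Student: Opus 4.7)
The plan is to construct $(X, L, \tau^*)$ by iterating Theorem~\ref{thm} along the jump times of $Z$, then verify minimality and maximality using the dual cone structure. Since each $Z^i$ has a.s.\ finitely many jumps in any compact interval, enumerate the joint jump times of $Z=(Z^1,\dots,Z^n)$ as $0 \leq T_0 < T_1 < \cdots$, conventionally taking $T_0=0$ (with $\Delta Z_0 = 0$ if no jump at $0$). Proceeding inductively: at each $T_k$, if $X_{T_k-}-\Delta Z_{T_k} \in C^*$, invoke Theorem~\ref{thm} to obtain the unique minimal $\Delta L_{T_k}$ and extend $(X,L)$ to time $T_k$; otherwise set $\tau^* := T_k$ and stop. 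Between consecutive jump times, set $L$ constant and $X_t = X_{T_k} + c(t-T_k)$: since the drift $c$ is nonnegative and $Z$ is constant, $X \geq 0$ and \eqref{eq:Ldef} hold without any reflection firing. This produces the candidate $(X,L)$ on $[0, \tau^*)$, with $\tau^*$ equal to the first jump time at which the dual-cone condition fails (or $+\infty$).

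For minimality, let $(\wt X,\wt L)$ be any strong solution on $[0,\wt\tau)$ with the same inputs. I argue $\wt L_t \geq L_t$ for $t \in [0, \wt\tau \wedge \tau^*)$ via the order-preserving property of the Skorokhod map $\Phi$ associated to \eqref{eq:Ldef}: raising any $L^j$ lowers the argument of $(\cdot)_-$ and hence raises $(\cdot)_-$ and its supremum. The Picard iterates $L^{(k+1)} := \Phi(L^{(k)})$ starting from $L^{(0)} \equiv 0$ therefore form a non-decreasing sequence bounded above by $\wt L$ (which is a fixed point of $\Phi$), and their monotone limit $L^\infty$ is itself a fixed point---a strong solution---satisfying $L^\infty \leq \wt L$. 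Applying the uniqueness-of-minimal-increment clause of Theorem~\ref{thm} at each jump identifies $L^\infty$ with the greedy construction on $[0,\tau^*)$, giving $L \leq \wt L$. Uniqueness of the minimal $(X,L,\tau^*)$ follows immediately.

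For the maximality $\wt\tau \leq \tau^*$, the key observation is the cone inequality: for any $u \in C$, $u^\top(I-Q) \leq 0$ componentwise and $L \geq 0$, so $u^\top(I-Q)L \leq 0$. Combined with $\wt L \geq L$,
\begin{equation*}
u^\top(X_t - \wt X_t) = u^\top(I-Q)(L_t - \wt L_t) \geq 0,
\end{equation*}
whence $u^\top X_t \geq u^\top \wt X_t$ pointwise for every $u \in C$. Now suppose $\wt\tau > \tau^*$: the alternative extends past $\tau^*$, so Theorem~\ref{thm} applied to $(\wt X,\wt L)$ yields $\wt X_{\tau^*-}-\Delta Z_{\tau^*} \in C^*$. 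Then for each $u \in C$,
\begin{equation*}
u^\top(X_{\tau^*-} - \Delta Z_{\tau^*}) \geq u^\top(\wt X_{\tau^*-} - \Delta Z_{\tau^*}) \geq 0,
\end{equation*}
so $X_{\tau^*-} - \Delta Z_{\tau^*} \in C^*$ as well. But then Theorem~\ref{thm} extends the minimal solution past $\tau^*$, contradicting the construction.

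The main obstacle is bridging the two characterizations of minimality---the greedy jump-by-jump construction and the Picard limit of $\Phi$---to ensure the minimal increments from Theorem~\ref{thm} indeed deliver the smallest global fixed point. This reduces to showing that $L^\infty$ is a well-defined c\`adl\`ag process and that, at each jump, its increment matches the one from Theorem~\ref{thm}. With only finitely many jumps per compact interval, the iteration stabilizes rapidly on each inter-jump segment; the subtlety lies at the jumps themselves, where the sup formula is self-referential and one must invoke Theorem~\ref{thm}'s uniqueness to match. Once this bridge is in place, the cone geometry delivers both the dominance $\wt L \geq L$ and the maximality of $\tau^*$ transparently.
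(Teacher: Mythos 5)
Your construction of the candidate solution (apply Theorem~\ref{thm} at each jump time of $Z$, keep $L$ constant in between, and stop at the first jump where the dual-cone condition fails) matches the paper, and your maximality step is correct: the identity $u^\top(X_t-\wt X_t)=u^\top(I-Q)(L_t-\wt L_t)\ge 0$ for $u\in C$, combined with Theorem~\ref{thm} applied to $(\wt X,\wt L)$ at $\tau^*$, transfers membership of $\wt X_{\tau^*-}-\Delta Z_{\tau^*}$ in $C^*$ to $X_{\tau^*-}-\Delta Z_{\tau^*}$ and yields the contradiction; this is exactly the justification behind the paper's terse ``Case 1''. The problem is the minimality step, on which the maximality step also leans (it needs $\wt L\ge L$ up to $\tau^*$). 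You route minimality through a path-space Picard iteration $L^{(k+1)}=\Phi(L^{(k)})$ and then explicitly defer the identification of the monotone limit $L^\infty$ with the greedy construction, calling it ``the main obstacle''. That bridge is not a formality: you must show $L^\infty$ is c\`adl\`ag, that it is a fixed point, that it has no ``spontaneous'' reflection jumps between $Z$-jumps (such jumps are genuinely possible for non-minimal solutions when $\rho(Q)\ge1$; e.g.\ with $n=2$, $q_{12}=q_{21}=2$ and state $(1,1)$, the nonzero jump $(1,1)$ satisfies \eqref{eq:Ljump}), and that its increment at each $T_k$ is the minimal LP jump. As submitted, the central inequality $\wt L\ge L$ is therefore not proved.

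The paper's proof shows no limit procedure is needed: fix a competing solution $(\wt X,\wt L)$ on $[0,\wt\tau)$ and induct over the jump times. On $[0,\tau_1)$ one has $L=0\le\wt L$ trivially. At a jump time $\tau_k$ (given inductively $\wt L_{\tau_k-}\ge L_{\tau_k-}$), the non-negativity of $\wt X_{\tau_k}$, rewritten using the shared inputs, says precisely that $\Delta:=\wt L_{\tau_k}-L_{\tau_k-}\ge 0$ satisfies $(I-Q)\Delta\ge-(X_{\tau_k-}-\Delta Z_{\tau_k})$, i.e.\ $\Delta$ is feasible for the LP at $\tau_k$ driven by the \emph{minimal} solution's state; the minimality clause of Theorem~\ref{thm} then forces $\Delta\ge\Delta L_{\tau_k}$, hence $\wt L_{\tau_k}\ge L_{\tau_k}$, and feasibility of $\Delta$ likewise rules out $\wt\tau>\tau^*$ when the LP at $\tau^*$ is infeasible. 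So either carry out the fixed-point bridge in full, or replace that part of your argument with this one-step feasibility-plus-minimality contradiction, which handles arbitrary competing solutions (including ones with extra reflection jumps) with no convergence analysis.
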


\begin{remark}
If $\rho(Q)<1$, then it is easy to see that $C$ is empty, $C^*=\R^n$, and hence solutions may always be continued. Therefore, we have found a new derivation for the existence result of \cite[Proposition 1]{reiman_open_1984}.
\end{remark}

\begin{proof}[Proof of Corollary \ref{coro}]
Consider any arbitrary solution  $(\wt{X},\wt{L})$ to \eqref{eq:ps}--\eqref{eq:Ldef} on some interval $[0,\wt\tau)$.
Let $\tau_1$ denote the first time that one or multiple of the processes $Z^1,\dots,Z^n$ exhibit a jump. Set $L^1_t=\dots =L^n_t=0$ on $[0,\tau_1)$. From \eqref{eq:Ldef}, we see that for $1\le i\le n$, $\wt{L}^i_t\ge 0=L^i_t$ for all $t\in [0,\tau_1\wedge \wt\tau)$, where we have used the notation $s\wedge t:=\min(s,t)$. We consider now two cases:

\medskip

\emph{Case 1.} If $X_{\tau_1-}-\Delta Z_{\tau_1} \notin C^*$, then we set $\tau^*=\tau_1$. If $\wt\tau \ge \tau^*$ then we must have $\wt{X}_{\tau_1-}-\Delta Z_{\tau_1} \notin C^*$ and hence $\wt\tau > \tau^*$ is impossible.

\medskip

\emph{Case 2.} If $X_{\tau_1-}-\Delta Z_{\tau_1} \in C^*$ we let $\Delta L_{\tau_1}$ be the unique minimal jump size from Theorem \ref{thm} and set $L_{\tau_1}=0+\Delta L_{\tau_1}$.
If $\wt\tau < \tau_1$, then $\wt{L}^i_t\ge L^i_t$ for all $t\in [0,\tau_1\wedge \wt\tau]$ trivially. If not, then suppose for the sake of contradiction that $\wt{L}^i_{\tau_1}< L^i_{\tau_1}$ for some $1\le i\le n$ and for $1\le i \le n$ set 
\begin{equation*}
\wh{L}^i_t=\begin{cases}
0&t<\tau_1\\
\wt{L}^i_{\tau_1} & t=\tau_1.
\end{cases}
\end{equation*}
Then, the definition of $\tau_1$ along with the non-negativity of $\wt{X}$ imply that
\begin{equation*}
X_{0-}^i+c_it-Z^i_t+\wh{L}^i_t-\sum_{j\neq i}q_{ij} \wh{L}^j_t\ge 0
\end{equation*}
for all $t\in[0,\tau_1]$ and $1\le i\le n$. However, $(\Delta\wh{L}^1_{\tau_1},\dots,\Delta\wh{L}^n_{\tau_1})=(\wt{L}^1_{\tau_1},\dots, \wt{L}^n_{\tau_1})$ contradicts the minimality of $\Delta L_{\tau_1}$ with respect to the partial order $\ge$ on $\R_+^n$. Therefore $\wt{L}^i_t\ge L^i_t$ for all $t\in [0,\tau_1\wedge \wt\tau]$. 

\medskip

Let $\tau_2$ denote the next jump time for any of the processes $Z^1,\dots,Z^n$. Repeating the above argument shows that $\wt{L}^i_t\ge L^i_t$ for all $t\in [0,\tau_2\wedge \wt\tau]$ and $\wt\tau > \tau^*$ is impossible. We continue as before with $\tau_3,\tau_4$, and so on. Since $(\wt{X},\wt{L})$ and $\wt\tau$ are arbitrary, we have completed the proof. \qedhere
\end{proof}

\section{Increments of the Skorokhod Map}

We prove here an auxiliary result, which allows us to express $\Delta L^1_\tau,\dots, \Delta L^n_\tau$ in terms of $X_{\tau-}$ and $\Delta Z_\tau$.

\begin{lemma}\label{lemma}
Let $(Y_t)_{t\ge 0-}$ be a c\`adl\`ag process taking values in $\R$. Define $L_t=\sup_{s\le t} (Y_s)_-$ and $X_t=Y_t+L_t$. Then for all $t\ge 0-$,
\begin{equation}\label{eq:lemma}
\Delta L_t =\lb(X_{t-}+\Delta Y_t\rb)_-
\end{equation}
\end{lemma}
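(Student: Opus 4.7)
The plan is to reduce the claim to a short case analysis after rewriting both sides in terms of $Y_t$ and $L_{t-}$.

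First, I would observe that $L$ is non-decreasing by construction, so its left limit exists at every $t$ and equals $L_{t-}=\sup_{s<t}(Y_s)_-$. Consequently, the full supremum at time $t$ splits as $L_t=\sup_{s\le t}(Y_s)_-=\max\lb(L_{t-},(Y_t)_-\rb)$, which gives the identity
\begin{equation*}
\Delta L_t = \max\lb(L_{t-},(Y_t)_-\rb)-L_{t-} = \lb((Y_t)_- - L_{t-}\rb)_+.
\end{equation*}
On the other hand, since $X_{t-}=Y_{t-}+L_{t-}$ and $\Delta Y_t=Y_t-Y_{t-}$, the right-hand side of \eqref{eq:lemma} simplifies to $(Y_t+L_{t-})_-$. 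Hence the lemma reduces to the pointwise real-variable identity $((Y_t)_- - L_{t-})_+ = (Y_t+L_{t-})_-$ under the constraint $L_{t-}\ge 0$.

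Next I would dispose of this identity by two cases on the sign of $Y_t+L_{t-}$. If $Y_t+L_{t-}\ge 0$, then $(Y_t+L_{t-})_-=0$, and also $(Y_t)_-\le L_{t-}$ (trivial if $Y_t\ge 0$, and otherwise equivalent to $-Y_t\le L_{t-}$), so both sides vanish. If instead $Y_t+L_{t-}<0$, then $Y_t<-L_{t-}\le 0$, whence $(Y_t)_-=-Y_t>L_{t-}$ and both sides equal $-Y_t-L_{t-}$.

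There is no substantial obstacle here: the only point requiring minor care is confirming $L_{t-}=\sup_{s<t}(Y_s)_-$, which follows from the monotonicity of $L$ together with the fact that the left limit of a non-decreasing c\`adl\`ag function is the supremum over the strict left neighborhood. Once this is in place, the remainder is algebraic manipulation, and the entire argument takes only a few lines.
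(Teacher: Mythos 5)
Your proof is correct and follows essentially the same route as the paper's: both reduce the right-hand side to $(Y_t+L_{t-})_-$ and split on whether $-Y_t\le L_{t-}$, i.e.\ on the sign of $Y_t+L_{t-}$. You are a bit more explicit than the paper in first establishing $L_{t-}=\sup_{s<t}(Y_s)_-$ and the decomposition $L_t=\max\lb(L_{t-},(Y_t)_-\rb)$, which the paper uses tacitly when it asserts $\Delta L_t=0$ in its first case and $L_t=-Y_t$ in its second.
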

\begin{proof}
We split the proof into the cases $-Y_t\le L_{t-}$ and $-Y_t> L_{t-}$
First, suppose $-Y_t\le L_{t-}$. Then $\Delta L_t=0$ and 
\begin{align*}
X_{t-}+\Delta Y_t=Y_{t-}+L_{t-}+Y_t-Y_{t-}\ge 0.
\end{align*}
Hence $\lb(X_{t-}+\Delta Y_t\rb)_-=0=\Delta L_t$ and \eqref{eq:lemma} holds. Next, if $-Y_t> L_{t-}$, then $L_t=-Y_t\ge 0$ and hence
\begin{align*}
\Delta L_t = L_t-L_{t-}=-Y_t-(X_{t-}-Y_{t-})=-(X_{t-}+\Delta Y_t)\ge 0.
\end{align*}
We see that \eqref{eq:lemma} is established in this case as well. \qedhere
\end{proof}

\section{Proof of Theorem \ref{thm}}\label{sec:proof}

We proceed by reducing to a linear programming problem. 
An example of the primal and dual problems with $n=2$ is plotted in Figure \ref{fig:plots}.

\begin{proof} Suppose $\Delta Z_\tau=(\Delta Z^1_\tau,\dots,\Delta Z^n_\tau)$ is non-zero (otherwise, the problem is trivial). By Lemma \ref{lemma}, if we can exhibit a jump $\Delta L_\tau=(\Delta L^1_\tau,\dots,\Delta L^n_\tau)\in\R^n_+$ such that
\begin{equation}\label{eq:Ljump}
\Delta L^i_\tau=\bigg(X_{\tau-}^i-\Delta Z_\tau^i-\sum_{j\neq i}q_{ij}\Delta L_\tau^j\bigg)_-,\quad 1\le i\le n,
\end{equation}
then setting $L_\tau=L_{\tau_-}+\Delta L_\tau$ and $X_\tau=X_{\tau_-}-\Delta Z_\tau+(I-Q)\Delta L_\tau$ yields a solution to \eqref{eq:ps}--\eqref{eq:Ldef} on $[0,\tau]$. Consider the linear programming (LP) problem
\begin{align*}
\text{minimize} &\quad\|\Delta L_\tau\|_1\\
\text{subject to} &\quad\Delta L_\tau \ge 0\text{ and }(I-Q)\Delta L_{\tau}\ge -\lb(X_{\tau-}-\Delta Z_\tau\rb).
\end{align*}
If $\Delta L_\tau$ solves LP, then for each $i$, at least one of the following equalities must hold
\begin{align*}
\Delta L^i_\tau=0\quad\text{ or }\quad\Delta L^i_\tau=-\lb(X_{\tau-}^i-\Delta Z_\tau^i\rb)+\sum_{j\neq i}q_{ij}\Delta L_\tau^j,
\end{align*}
since otherwise $\Delta L^i_\tau$ may be decreased and the objective function $\|\Delta L_\tau\|_1$ will be decreased. Hence, if $\Delta L_\tau$ solves LP then \eqref{eq:Ljump} holds. We tackle the feasibility of LP using duality.

\medskip

Rewrite the constraints on $\Delta L_\tau$ as
\begin{equation*}
A\Delta L_\tau\ge b \quad \text{where}\quad  A= \begin{bmatrix}
    I-Q \\
    I
\end{bmatrix} 
\quad\text{and}\quad b = 
\begin{bmatrix}
    -X_{\tau-}+\Delta Z_\tau \\
    0
\end{bmatrix}.
\end{equation*}
Farkas's Lemma (see, for instance \cite[Theorem 2.1]{dantzig_linear_1997-1}, where it is called the Infeasibility Theorem) implies that LP is infeasible if and only if there exists $y\ge 0$ such that $y^\top A =0$ and $y^\top b >0$. We seek to reduce this latter condition to the condition in the statement of the theorem. Write $y^\top=[u^\top,\, v^\top]$ where $u,v\in \R^n$. Then $y^\top A =0$ implies that $v^\top=-u^\top(I-Q)$. The condition $y^\top b >0$ simplifies to
\begin{equation*}
y^\top b=-u^\top(X_{\tau-}-\Delta Z_\tau)>0.
\end{equation*}
Therefore, LP is infeasible if and only if there exists $u\ge 0$ with $-u^\top(I-Q)=v^\top\ge 0$ and $-u^\top(X_{\tau-}-\Delta Z_\tau)>0$. The conditions on $u$ and $v$ define the cone $C$ from the statement of the theorem, and there will be $u\in C$ such that $-u^\top(X_{\tau-}-\Delta N_\tau)>0$ iff $X_{\tau-}-\Delta N_\tau\notin C^*$. 

\medskip

It remains to show uniqueness of the minimal jump. Suppose that $\Delta L_\tau\ge 0$ and ${[I-Q]_i \Delta L_\tau =0}$, where $[I-Q]_i$ denotes the $i$th row of $I-Q$. Then $\Delta L_{\tau}^i=\sum_{j\neq i}q_{ij}\Delta L_{\tau}^j$ and hence
\begin{equation*}
(1,1,\dots,1)^\top \Delta L_{\tau}=\Delta L_{\tau}^i+\sum_{j\neq i}\Delta L_{\tau}^j=\sum_{j\neq i}(1+q_{ij})\Delta L_{\tau}^j\ge 0
\end{equation*}
with equality if and only if $\Delta L_\tau=0$. Therefore, none of the constraining faces in the feasible region are orthogonal to $(1,1,\dots,1)$, and hence uniqueness holds for LP. Note that uniqueness still holds if the objective function is replaced by $a^\top \Delta L_\tau$ for any $a$ with strictly positive entries. This yields minimality with respect to the partial order $\ge$ on $\R_+^n$.
\qedhere
\end{proof}

\begin{figure}[h]
\centering
\begin{subfigure}{0.45\textwidth}
\includegraphics[width=1\linewidth]{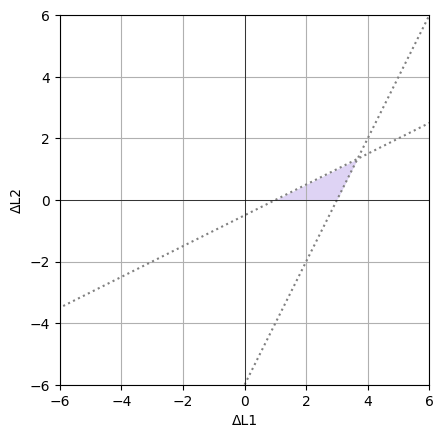}
\caption{Primal problem. The feasible region is plotted in purple. The minimal jump is given by $(1,0)$.}
\end{subfigure}
~
\begin{subfigure}{0.45\textwidth}
\includegraphics[width=1\linewidth]{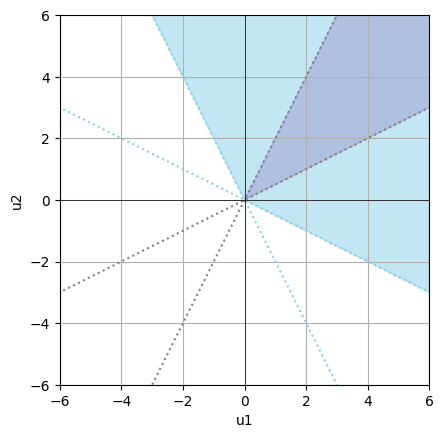}
\caption{Dual problem. $C$ is plotted in purple, and the union of the purple and blue regions gives $C^*$. Note that $(-1,6)\in C^*$.}
\end{subfigure}%
\caption{Example primal and dual problems with $n=2$, $q_{12}=q_{21}=2$, and ${(X_{\tau-}^1-\Delta Z_{\tau}^1,X_{\tau-}^2-\Delta Z_{\tau}^2)=(-1,6)}$. In this case, the primal problem is feasible.}
\label{fig:plots}
\end{figure}

\section{Fixed Point Approach}

When the LP problem is feasible, it can be solved efficiently using the simplex method \cite{dantzig_linear_1997-1}. Another approach is to construct a solution iteratively with a monotone operator (see, for instance, a similar approach in \cite[Proposition 2.3]{cuchiero_propagation_2023}). For a given input vector $X_{\tau-}-\Delta Z_\tau$, define the operator $\Gamma:\R_+^n\to\R_+^N$ by
\begin{equation*}
\Gamma[z]=\bigg(\big(X_{\tau-}^1-\Delta Z_\tau^1-\sum_{j\neq 1} q_{1j} z_j\big)_-,\dots,\big(X_{\tau-}^1-\Delta Z_\tau^1-\sum_{j\neq n} q_{nj} z_j\big)_-\bigg)
\end{equation*}
We notice that fixed points of $\Gamma$ (that is, any $z^*$ such that $\Gamma[z^*]=z^*$) satisfy \eqref{eq:Ljump}. One can check that $\Gamma$ is monotone (that is, $\Gamma[z]\ge z$ for all $z$). Therefore, the Knaster--Tarski Theorem implies that the fixed points of $\Gamma$ form a complete lattice (this result is often used for existence of fixed points in systemic risk literature, such as the seminal work \cite{eisenberg_systemic_2001}). Starting from $z=0$ and applying $\Gamma$ iteratively yields the (\emph{a fortiori}) unique solution to LP in the limit. 

\section{Application to Reinsurance}\label{sec:app}

We consider now a particular case with $n=2$ insurance firms. As a base case in the absence of reinsurance, we suppose that for $t\ge 0-$, $X^{(1)}$ and $X^{(2)}$ satisfy 
\begin{equation*}
X^{(1)}_t=X^{1}_{0-}+c_1 t-Z^1_t,\quad\text{and}\quad
X^{(2)}_t=X^{2}_{0-}+c_1 t-Z^2_t,
\end{equation*}
where
\begin{equation*}
Z^1_t=\sum_{k=1}^{N^1_t+N^3_t} Y^1_k,\quad\text{and}\quad
Z^2_t=\sum_{k=1}^{N^2_t+N^3_t} Y^2_k;
\end{equation*}
$X^1_{0-},X^2_{0-}$ are $\calF_{0-}$-measurable initial conditions; $c_1,c_2\ge0$ are fixed; $N^1,N^2$, and $N^3$ are independent $\bF$-adapted Poisson processes with intensities $\lambda_1,\lambda_2$, and $\lambda_3$; and $(Y^1_k)_{k\ge1}$ and $(Y^2_k)_{k\ge1}$ are each sequences of independent and identically distributed random variables with distributions $Y^1_1$ and $Y^2_1$, respectively. The ruin time of each firm is given by
\begin{equation*}
T^1=\inf\{t\ge0: X^{(1)}_t\le 0\},\quad\text{and}\quad T^2=\inf\{t\ge0: X^{(2)}_t\le 0\}.
\end{equation*}
We have here two instances of a classical insurance risk model (see, for instance, \cite{kluppelberg_ruin_2004}), which have been coupled together by the common Poisson process $N^3$.

\medskip

In Figure \ref{fig:failureprob}, we plot Monte Carlos estimates of the ruin probabilities $t\mapsto \bP(T^1\le t)$, $t\mapsto \bP(T^2\le t)$, $t\mapsto \bP(T^1\le t, T^2\le t)$, and $t\mapsto \bP(T^1\le t \text{ or } T^2\le t)$ for a particular choice of parameters. The plots were generated using 20000 Monte Carlo trials, $X^{(1)}_{0-} = X^{(2)}_{0-} = 5$, $Y^1_1$ and $Y^2_1$ are both $\mathrm{Exp}(1)$ random variables,  $c_1 = c_2 = 1.2$, and $\lambda_1 = \lambda_2 = \lambda_3 = 0.6$. The conditions for the firms are symmetrical so that $\bP(T^1\le t)=\bP(T^2\le t)$ for all $t$. Closed form solutions for the ruin probabilities are available in some special cases (see \cite{kluppelberg_ruin_2004} and also \cite{bernyk_law_2008}), but we do not pursue them here. For limiting behavior as $t\to\infty$, \cite[Lemma 8.7.1.1]{jeanblanc_mathematical_2009} gives that $\bP(T^1<\infty)=1$ if $c_1/(\lambda_1+\lambda_3)\le \E[Y_1^1]<\infty$, and similarly for $T^2$. More generally, one may deduce an integral equation for ${\Psi(x):=\bP(T^1=\infty|X^1_{0-}=x)}$ in the variable $x$ (we refer the interested reader to \cite[Subsection 8.7.2]{jeanblanc_mathematical_2009}).

\begin{figure}
\centering
\includegraphics[width=0.8\linewidth]{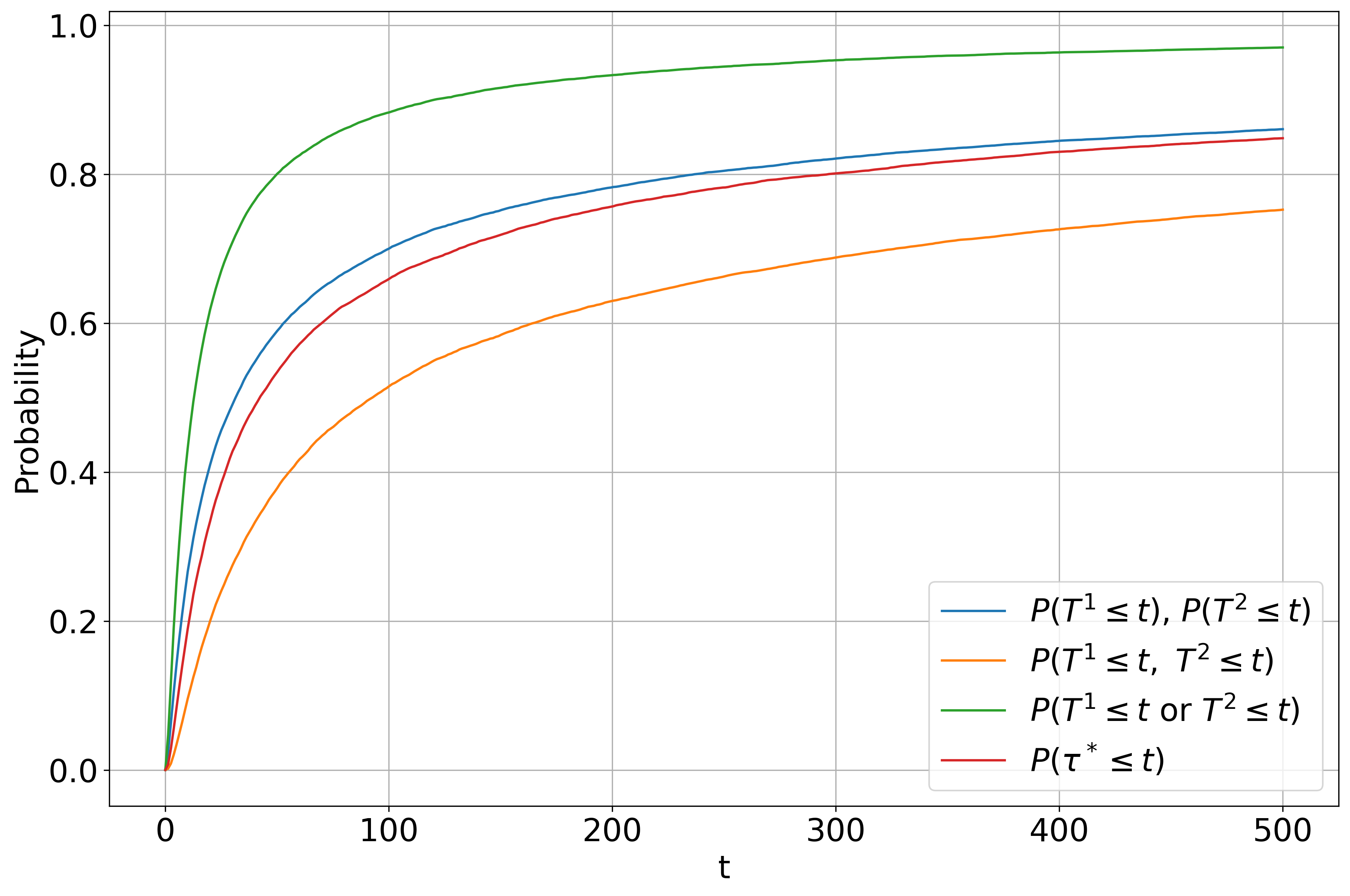}
\caption{Ruin probabilities on the time interval $t\in[0,500]$ when $\alpha=0.05$.}
\label{fig:failureprob}
\end{figure}

\medskip

Next, we compare to the case with reinsurance, that is, the minimal solution of the reflected system \eqref{eq:ps}--\eqref{eq:Ldef}. We take the same initial conditions and driving processes. We suppose that $q_{12}=q_{21}=1+\alpha$ where $\alpha>0$ represents friction due to taxes and fees. In Figure \ref{fig:failureprob}, we have plotted a Monte Carlo estimate for $t\mapsto \bP(\tau^*\le t)$ (again using 20000 trials) alongside $t\mapsto \bP(T^1\le t)$,  $t\mapsto \bP(T^1\le t, T^2\le t)$, and $t\mapsto \bP(T^1\le t \text{ or } T^2\le t)$, with $\alpha=0.05$ and all other parameters unchanged. In general, the distribution of $\bP(\tau^*\le t)$ is difficult to compute. However, for the special case $\alpha=0$ we see that 
\begin{equation*}
X^1_t+X^2_t=X^{1}_{0-}+X^{2}_{0-}+(c_1 +c_2)t-Z^1_t-Z^2_t,
\end{equation*}
is also a classical 1-dimensional ruin model. Using \cite[Lemma 8.7.1.1]{jeanblanc_mathematical_2009}, we see that $\bP(\tau^*<\infty)=1$ when $(c_1+c_2)/(\lambda_1+\lambda_2+\lambda_3)\le \frac{2}{3}(\E[Y_1^1]+\E[Y_2^1])$, and by comparison this sufficient condition holds for any $\alpha>0$. 

\medskip

With the parameters used in Figure \ref{fig:failureprob}, at any given time we see that the Monte Carlo estimate for the ruin probability of the reflected system $(X^1,X^2)$ is higher than the probability that both $X^{(1)}$ and $X^{(2)}$ fail, but lower than the probability that at least one of $X^{(1)}$ and $X^{(2)}$ fail. Furthermore, $\bP(\tau^*\le t)\le \bP(T^1\le t)=\bP(T^2\le t)$ for all $t$. This suggests that each individual firm might consider opting into this reinsurance agreement to increase their individual chance of survival on a given time horizon. If the friction parameter $\alpha$ is increased, then the curve $t\mapsto \bP(T^1\le t)$ may no longer dominate $t\mapsto\bP(\tau^*\le t)$. For instance, this is observed when $\alpha$ is increased to $0.5$ and all other parameters are unchanged (see Figure \ref{fig:friction}).

\medskip

\begin{figure}
\centering
\includegraphics[width=0.8\linewidth]{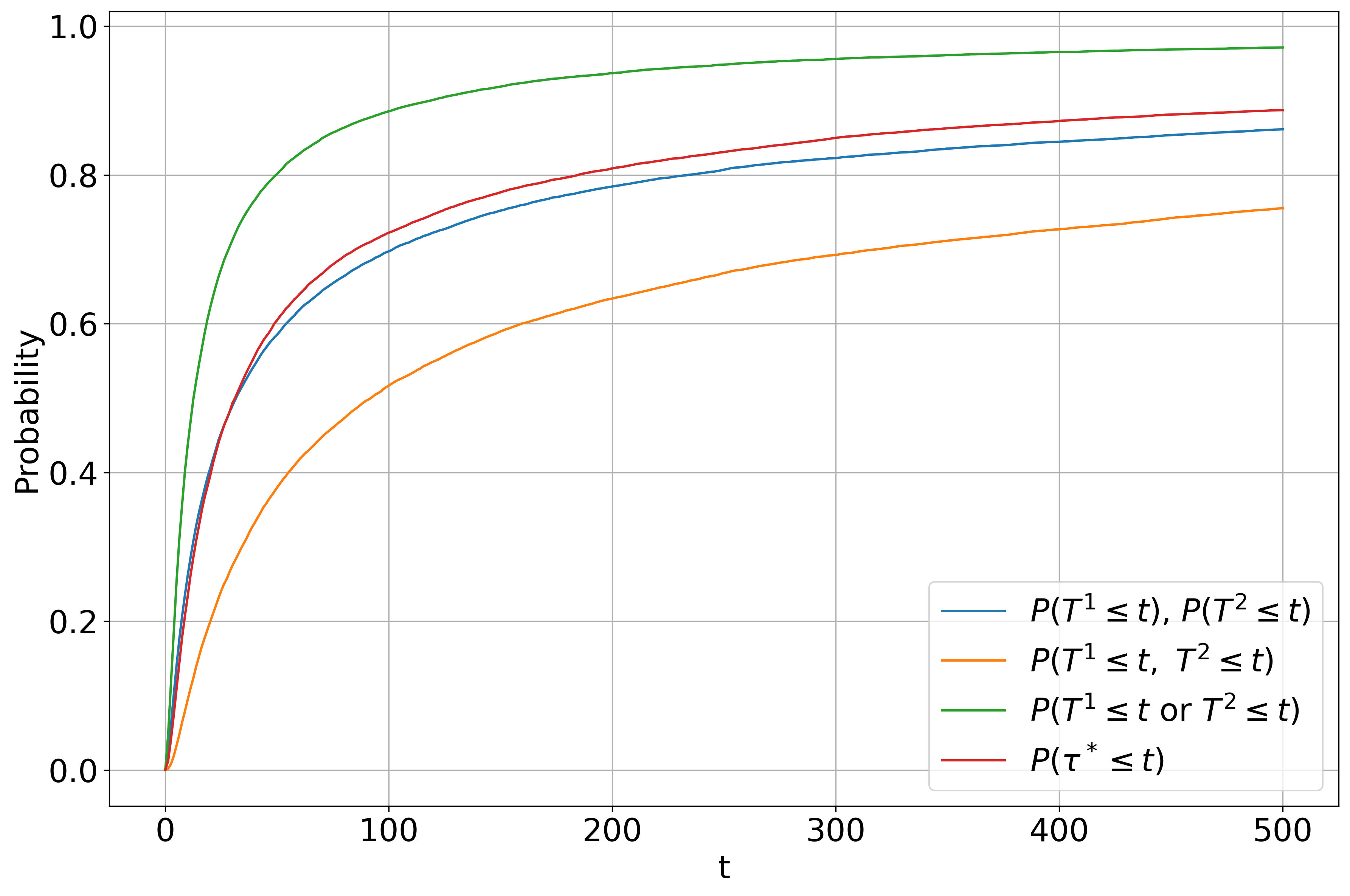}
\caption{Ruin probabilities on the time interval $t\in[0,500]$ when $\alpha=0.5$.}
\label{fig:friction}
\end{figure}

For small times, we can compare the failure probabilities by computing the slopes of the curves. The quantities $\frac{\d}{\d t} \bP(T^1\le t)|_{t=0}$, $\frac{\d}{\d t} \bP(T^2\le t)|_{t=0}$, $\frac{\d}{\d t} \bP(T^1\le t, T^2\le t)|_{t=0}$, $\frac{\d}{\d t}\bP(T^1\le t\text{ or } T^2\le t)|_{t=0}$, and $\frac{\d}{\d t} \bP(\tau^*\le t)|_{t=0}$ are given by the intensities for which $t\mapsto X_{0-}-Z_t$ jumps out of the regions $H^1=\{(y_1,y_2):y_1>0\}$, $H^2=\{(y_1,y_2):y_2>0\}$, $H^1\cup H^2$, $H^1\cap H^2$, and $C^*$, respectively. For $\alpha>0$, the inclusions $H^1\cap H^2\subseteq C^* \subseteq H^1\cup H^2$ imply that $\frac{\d}{\d t} \bP(T^1\le t, T^2\le t)|_{t=0}\le \frac{\d}{\d t} \bP(\tau^*\le t)|_{t=0} \le \bP(T^1\le t\text{ or } T^2\le t)$ for any choice of the model parameters $(X^{(1)}_{0-},X^{(2)}_{0-},Y^1_1,Y^2_1,c_1,c_2,\lambda_1, \lambda_2 , \lambda_3)$. For instance, in Figure \ref{fig:origincomparison}, we have plotted the same curves as in Figure \ref{fig:friction} on an interval near the origin: $t\in [0,10]$. While $\bP(\tau^*\le t)$ may be greater than $\bP(T^1\le t)$ for $t$ sufficiently large (Figure \ref{fig:friction}), this is not the case for small $t$ (Figure \ref{fig:origincomparison}).

\begin{figure}
\centering
\includegraphics[width=0.8\linewidth]{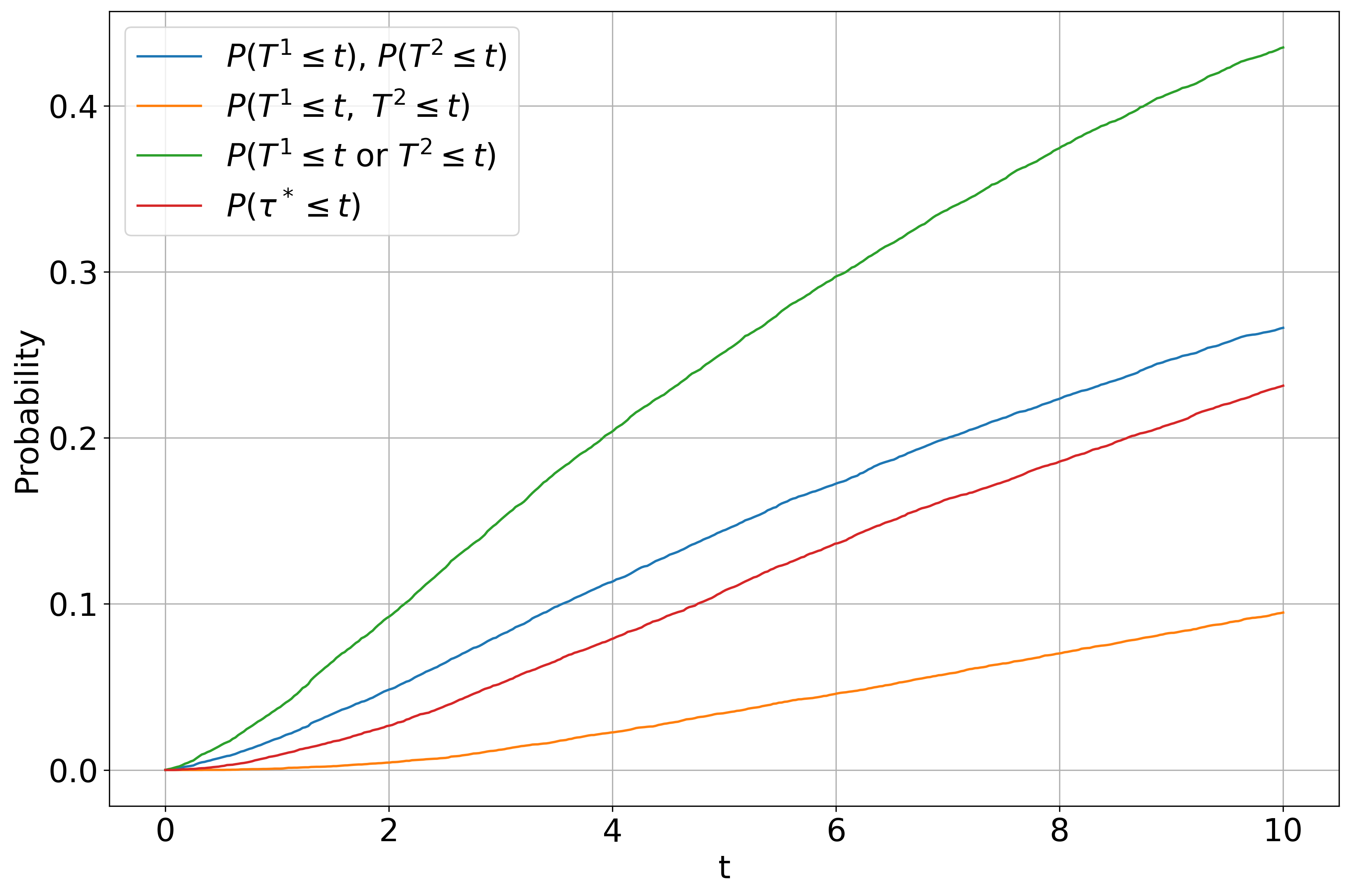}
\caption{Ruin probabilities on the time interval $t\in[0,10]$ when $\alpha=0.5$.}
\label{fig:origincomparison}
\end{figure}

\section*{Acknowledgments}

We acknowledge the Summer Research Institute at Barnard College as well as the Statistics Department at Columbia University for funding which supported this summer undergraduate research project. We thank Professor Karatzas for suggesting this collaboration, and for his helpful feedback on an early draft. Additionally, we thank Professor Reiman for many helpful discussions.

\printbibliography

\end{document}